\numberwithin{equation}{section}
\DeclareMathOperator{\E}{\mathbb{E}}
\DeclareMathOperator{\Span}{span}
\DeclareMathOperator{\sign}{sign}
\renewcommand{\Pr}[2][]{\mathbb{P}_{#1} \left\{ #2 \rule{0mm}{3mm}\right\}}
\newcommand{\ip}[2]{\langle#1,#2\rangle}
\newcommand{\Bigip}[2]{\Big\langle#1,#2\Big\rangle}
\def \P {\mathbb{P}}
\def \R {\mathbb{R}}
\def \e {\varepsilon}
\def \d {\delta}
\def \s {\sigma}
\def \tran {\mathsf{T}}
\def \one {{\textbf 1}}
\def \Id {\mathrm{Id}}
\def \H {\mathcal{H}}
\newtheorem{theorem}{Theorem}[section]
\newtheorem{lemma}[theorem]{Lemma}
\theoremstyle{definition}
\newtheorem{definition}[theorem]{Definition}
\newtheorem{question}[theorem]{Question}
\theoremstyle{remark}
\newtheorem{example}[theorem]{Example}
\begin{document}

\title{Are most Boolean functions determined \\ by low frequencies?}

\author[R. Vershynin]{Roman Vershynin}
\address{Department of Mathematics, University of California, Irvine}
\email{rvershyn@uci.edu}
\thanks{Partially supported by NSF DMS-1954233, NSF DMS-2027299, U.S. Army 76649-CS, and NSF+Simons Research Collaborations on the Mathematical and Scientific Foundations of Deep Learning.}

\keywords{Boolean functions, marginals, hyperplane arrangements}
\subjclass[2010]{06E30, 42C10, 60D05, 52C35}

\begin{abstract}
	We ask whether most Boolean functions are determined by their low frequencies. We show a partial result: for almost every function $f: \{-1,1\}^p \to \{-1,1\}$ 
  there exists a function $f': \{-1,1\}^p \to (-1,1)$
  that has the same frequencies as $f$ up to dimension $(1/2-o(1))p$.
\end{abstract}

\maketitle

\section{Introduction}

\subsection{Uniqueness of Boolean data}

When is a high-dimensional distribution determined by its low-dimensional marginals?
To be specific, consider a random vector $X = (X_1,\ldots,X_p)$ that takes values in $\{0,1\}^p$.
We may wonder if there exist a random vector $Y = (Y_1,\ldots,Y_p)$ that also takes values in $\{0,1\}^p$, whose all marginal distributions up to a given dimension $d<p$ are the same as those\footnote{By this we mean that for any subset of coordinates $J$ or cardinality $\abs{J} \le d$, the distributions of the random vectors $(X_j)_{j \in J}$ and $(Y_j)_{j \in J}$ are the same.} of $X$, but whose distribution is different from that of $X$ everywhere on the cube.\footnote{By this we mean 
that $\Pr{X = \theta} \ne \Pr{Y = \theta}$ for all $\theta \in \{0,1\}^p$.} If this does happen, we call the distribution of $X$ {\em non-unique} with respect to marginals up to dimension $d$.

\smallskip

Some distributions are very much unique. For example, it is easy to check the following:

\begin{example}	\label{ex: unique}
  If $X_1=\cdots=X_p$ almost surely, then the distribution of $X$ is unique with respect to marginals up to dimension $2$.
\end{example}

However, we will show that ``most'' distributions on the cube are not unique with respect to marginals up to almost half the dimension:

\begin{theorem}	\label{thm: non-uniqueness}
  If $p$ is sufficiently large, then for most of the subsets $S \subset \{0,1\}^p$ the uniform distribution on $S$ is non-unique 
  with respect to the marginals of dimension $0.49 p$.
\end{theorem}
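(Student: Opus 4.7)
First I would reduce the theorem to the analytic statement in the abstract, after identifying $\{0,1\}^p$ with $\{-1,1\}^p$ in the usual way. Associate to $S$ the Boolean function $f_S := 2\ind_S - 1$, and recall that the dim-$\leq d$ marginals of a distribution on the cube encode the same information as its Fourier coefficients of degree $\leq d$. So it suffices to exhibit $f' : \{-1,1\}^p \to (-1,1)$ matching $f_S$ in every Fourier mode of degree $\leq d$; then $\Pr{Y = \theta} := (1 + f'(\theta))/(2|S|)$ is a probability measure on the cube (strict positivity from $f'>-1$; normalization from $\hat f'(\emptyset) = \hat f_S(\emptyset)$), has the same dim-$\leq d$ marginals as $\Unif(S)$, and differs from $\Unif(S)$ at \emph{every} $\theta$ since $f'(\theta)\notin\{-1,+1\}$.

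Next I would handle the existence of $f'$ by LP duality. Let $V_0 := \{h : \hat h(T) = 0 \text{ for all } |T| \leq d\}$ and $P_d := \Span\{\chi_T: |T| \leq d\}$, so $V_0 = P_d^\perp$ and $\dim P_d = N := \sum_{k \leq d}\binom{p}{k}$. I want $h \in V_0$ with $h(\theta) f_S(\theta) < 0$ pointwise (the magnitude bound $|h|<2$ is free by rescaling). Setting $\phi := -f_S h$, this is equivalent to finding a strictly positive $\phi \in \R^{\{-1,1\}^p}$ with $\phi \perp f_S P_d$. By a standard theorem of the alternative (separating the open positive orthant from the $N$-dimensional subspace $f_S P_d$), such $\phi$ fails to exist precisely when there is some nonzero $g \in P_d$ with $f_S(\theta) g(\theta) \geq 0$ for all $\theta$. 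Call such an $f_S$ \emph{bad}; the task is to show that almost no $S$ is bad.

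The final step is a hyperplane-arrangement count of bad functions. Up to tie-breaking at zeros of the witness $g$, a bad $f$ is a polynomial threshold function whose threshold lies in $P_d$, so by Warren's theorem (or, equivalently, Sauer--Shelah applied to the $2^p$ evaluation functionals on $P_d$) the number of strict sign patterns of nonzero elements of $P_d \subset \R^{2^p}$ is at most $(C\cdot 2^p/N)^N = 2^{O(pN)}$. The weak sign condition $f_S g \geq 0$ allows $g$ to vanish on a subset of the cube, multiplying the count by $2^{\#\mathrm{zeros}}$, but summing over the lower-dimensional strata of the arrangement keeps the total at $2^{O(pN)}$. For $d = 0.49p$, binary entropy satisfies $H(0.49) < 1$, so $N \leq 2^{H(0.49) p}$ and $pN = o(2^p)$; hence $\#\{\text{bad } f\} \leq 2^{o(2^p)} \ll 2^{2^p}$, so almost every $S$ is good.

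The hardest step will be the last one: the classical Warren/Sauer--Shelah bound is for \emph{strict} sign patterns, while a bad $f$ comes with a witness $g$ that may vanish at cube points, a priori creating a $2^{\#\mathrm{zeros}}$ blow-up. Either a perturbation argument (moving a weak witness inside $P_d$ to a strict one) or a direct faces-of-arrangement count weighted by vanishing multiplicity is needed; happily the exponential slack $2^p - O(pN)$ easily absorbs any polynomial-in-$2^p$ factor introduced along the way. Everything else is a clean combination of Fourier analysis on the cube and convex duality.
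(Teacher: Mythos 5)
Your reduction to the Fourier statement and your LP/Stiemke duality are both correct; the duality even sharpens the paper's Lemma~\ref{lem: H} to an equivalence: $f$ is non-unique if and only if no nonzero $g\in P_d$ satisfies $fg\ge 0$ pointwise. The paper only uses the easy direction of this (it lower-bounds the number of functions of the form $\sign h$, $h\in\H$, by counting regions of the arrangement $\{e_\theta^\perp\cap\H\}$ via Zuev's lemma and a matrix-Bernstein sampling estimate), so up to this point you are on an equivalent, differently organized track.

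The final counting step, however, has a genuine gap, and the difficulty you flag as "the hardest step" is not one that "exponential slack easily absorbs": your claimed bound $\#\{\text{bad }f\}\le 2^{O(pN)}$ is false. Consider the single witness $g=\prod_{i=1}^{d}(1+r_i)\in P_d$, a degree-$d$ polynomial equal to $2^d$ on the codimension-$d$ subcube $Q=\{\theta:\theta_1=\cdots=\theta_d=1\}$ and to $0$ elsewhere. Every $f$ with $f|_Q\equiv 1$ satisfies $fg\ge 0$, so this one stratum already certifies $2^{2^p-2^{p-d}}=2^{2^p-2^{0.51p}}$ bad functions --- vastly more than $2^{O(pN)}$ (since $pN\le p\,2^{H(0.49)p}=o(2^p)$), and not $2^{o(2^p)}$ but $2^{(1-o(1))2^p}$. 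These $f$ really are unique: for any admissible $f'$, the function $h=f-f'$ is orthogonal to $g$, yet $h=1-f'>0$ on $Q$ and $g$ is supported on $Q$, a contradiction. Consequently a perturbation to a strict witness is impossible for most of these $f$ (they are not degree-$d$ PTFs, of which there are only $2^{O(pN)}$), and the blow-up factor $2^{\abs{Z}}$ with $\abs{Z}$ as large as $2^p-2^{p-d}$ is doubly exponential in $p$, not "polynomial in $2^p$". The correct and sufficient target is the density statement $\#\{\text{bad }f\}=o(2^{2^p})$, and reaching it along your route requires controlling, for each realizable zero set $Z=Z(g)$, the trade-off between $\abs{Z}$ and the dimension of the stratum of witnesses vanishing on $Z$; this is exactly the work the paper does via resilience plus the sampling theorem (for most $\Theta$ of density $0.9$, no nonzero $g\in P_d$ can vanish on all of $\Theta$ minus one point), and it does not follow from a generic Warren or Sauer--Shelah count.
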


As we will see from the proof, ``most'' means all but at most $2^{c2^p}$ subsets where $c \in (0,1)$ is an absolute constant, and the number $0.49$ can be replaced by any constant smaller than $1/2$.

We will prove Theorem~\ref{thm: non-uniqueness} by showing that for most $S$, there exists a random vector $Y = Y(S) \in \{0,1\}^p$ 
that has the same marginals  up to dimension $0.49p$ as random vector $X$ uniformly distributed on $S$, and yet
$$
0 < \Pr{Y = \theta} < \frac{1}{\abs{S}} \quad \text{for all } \theta \in \{0,1\}^p.
$$
This yields non-uniqueness, since $\Pr{X = \theta}$ takes values $0$ and $1/\abs{S}$ only.
 
\subsection{Uniqueness of Boolean functions}
Theorem~\ref{thm: non-uniqueness} can be restated in a functional form. It is equivalent to saying that for most Boolean functions
$f: \{0,1\}^p \to \{0,1\}$ there exists a function $f': \{0,1\}^p \to (0,1)$ 
such that $f$ and $f'$ have the same marginals\footnote{This means that for any given set of coordinates 
$J \subset [p]$ with $\abs{J} \le 0.49p$ and any given values $(\tau_j)_{j \in J}$, we have 
$\sum_\theta f(\theta) = \sum_\theta f'(\theta)$ where the summation is over all $\theta \in \{0,1\}^p$
whose values on the coordinates in $J$ equal $(\tau_j)$.}
up to dimension $0.49p$. (To see the connection, let $f(\theta)$ be the indicator function of $S$ and set $f'(\theta) = \abs{S} \Pr{Y=\theta}$.)

Furthermore, by translation we can replace $0$ with $-1$ everywhere in the previous paragraph.
This allows to put Theorem~\ref{thm: non-uniqueness} in the context of Fourier analysis on the Boolean cube. 
Recall that {\em Rademacher functions} $r_j : \{-1,1\}^p \to \{-1,1\}$ are defined as 
$$
r_j(\theta) = \theta_j, \quad j=1,\ldots,p.
$$
{\em Walsh functions} $w_J : \{-1,1\}^p \to \{-1,1\}$ are
indexed by subsets $J \subset [p]$ and are defined as 
\begin{equation}	\label{eq: Walsh}
w_J = \prod_{j \in J} r_j,
\end{equation}
with the convention $w_\emptyset = 1$. The canonical inner product on the Boolean cube is 
defined as 
$$
\ip{f}{g}_{L^2} = \frac{1}{2^p} \sum_{\theta \in \{-1,1\}^p} f(\theta) \, g(\theta).
$$
Walsh functions form an orthonormal basis of $L^2(\{-1,1\}^p)$. 

In Section~\ref{s: proof of frequencies}, we shall prove:

\begin{theorem}	\label{thm: frequencies}
  If $p$ is sufficiently large, then 
  for most of the functions $f: \{-1,1\}^p \to \{-1,1\}$
  there exists a function $f': \{-1,1\}^p \to (-1,1)$
  that has the same frequencies\footnote{This means that for any set
  	$J \subset [p]$ with $\abs{J} \le 0.49p$, we have $\ip{f}{w_J} = \ip{f'}{w_J}$.}
  as $f$ up to dimension $0.49p$.
\end{theorem}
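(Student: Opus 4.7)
The plan is to take $f'$ to be the projection of $f$ onto the span of low-frequency Walsh functions,
$$f' \;:=\; \sum_{|J|\le d}\langle f,w_J\rangle\,w_J,\qquad d := \lfloor 0.49\,p\rfloor.$$
By orthogonality of the Walsh basis, $\langle f',w_J\rangle = \langle f,w_J\rangle$ automatically for every $|J|\le d$, so the matching-of-frequencies requirement is satisfied by construction. The entire proof then reduces to showing that, for most $f:\{-1,1\}^p\to\{-1,1\}$, the projection satisfies $|f'(\theta)|<1$ for every $\theta\in\{-1,1\}^p$ (the range $(-1,1)$ follows).

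To prove this pointwise $\ell^\infty$ bound, I would use the reproducing-kernel identity
$$f'(\theta_0)\;=\;\frac{1}{2^p}\sum_{\theta}K_d(\theta,\theta_0)\,f(\theta),\qquad K_d(\theta,\theta_0):=\sum_{|J|\le d}w_J(\theta)\,w_J(\theta_0),$$
and treat it, for uniformly random $f$, as a linear combination of $2^p$ i.i.d.\ Rademacher variables with coefficients $a_\theta := K_d(\theta,\theta_0)/2^p$. Parseval gives $\sum_\theta a_\theta^2 = D/2^p$, where $D := \sum_{k=0}^d\binom{p}{k}$. Since $0.49<1/2$, the binary-entropy estimate $D\le 2^{H_2(0.49)\,p}$ yields $D/2^p \le 2^{-\gamma p}$ with $\gamma:=1-H_2(0.49)>0$. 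Hoeffding's inequality then gives
$$\Pr{|f'(\theta_0)|\ge 1}\;\le\; 2\exp\!\bigl(-2^p/(2D)\bigr)\;\le\; 2\exp\!\bigl(-2^{\gamma p-1}\bigr),$$
and a union bound over the $2^p$ points yields $\Pr{\|f'\|_\infty\ge 1}\le 2^{p+1}\exp(-2^{\gamma p-1})$, which tends to zero doubly-exponentially in $p$.

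The main obstacle, as I see it, is really just the quantitative strength of ``most''. The Hoeffding-plus-union-bound route above produces a bad fraction of the form $\exp(-2^{\gamma p})$, which is more than enough for the ``almost every'' statement in the abstract but is noticeably weaker than the $2^{c\cdot 2^p}$ bad count suggested by the remark following Theorem~\ref{thm: non-uniqueness}. A subtle point worth double-checking in the Hoeffding step is that, although $K_d(\theta_0,\theta_0)=D$ is nearly $2^p$ (so the kernel peaks sharply at the diagonal), what drives the sub-Gaussian tail is the $\ell^2$-norm $\sqrt{D/2^p}$ of the coefficient vector, not its $\ell^\infty$-norm $D/2^p$; the peak does not spoil the concentration. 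If the sharper $2^{c\cdot 2^p}$ quantification is actually required, I would abandon the pure projection and instead seek $f'=f^{\le d}+h$ with $h$ a high-frequency correction supported essentially on the rare exceptional points where $|f^{\le d}(\theta_0)|\ge 1$; this is a linear-feasibility problem, tractable by a Gordan/Farkas duality argument, and is plausible because the high-frequency subspace has codimension $D=o(2^p)$, leaving ample room to manoeuvre.
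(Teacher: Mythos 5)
Your proposal is correct, and it takes a genuinely different route from the paper's. You take $f'$ to be the degree-$d$ truncation of $f$ itself, which matches the low frequencies tautologically, and reduce everything to the pointwise bound $\abs{f^{\le d}(\theta_0)}<1$; for uniformly random $f$ this is a Rademacher sum whose coefficient vector has squared $\ell^2$-norm $D/2^p\le 2^{-\gamma p}$ with $\gamma=1-H_2(0.49)>0$, and Hoeffding plus a union bound over the $2^p$ points finishes the job. All steps check out: your Parseval computation of $\sum_\theta a_\theta^2$ is right, only the $\ell^2$-norm (not the diagonal peak of the kernel) enters the sub-Gaussian tail, and strictness of the inequality is automatic since Hoeffding controls the event $\{\abs{f^{\le d}(\theta_0)}\ge 1\}$. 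The paper instead perturbs $f$ \emph{inside} the cube, setting $f'=f-\varepsilon h$ for $h$ in the high-frequency subspace $\H$ with $\sign h=f$, and shows most $f$ arise as such sign patterns by counting regions of a hyperplane arrangement (Zuev's lemma, resilient subsets) combined with a Rudelson-type sampling theorem. The trade-off: your argument is far shorter and more elementary, reaches the same $1/2$ threshold (both methods degenerate precisely when $\binom{p}{\le d}$ becomes a constant fraction of $2^p$), and gives an exceptional fraction of order $2^{p}\exp(-2^{\gamma p-1})$, which comfortably delivers the ``most'' asserted in the theorem; the paper's route, on the other hand, lower-bounds the \emph{number} of non-unique functions by the number of sign patterns of $\H$, a counting formulation that produces a whole ray of admissible perturbations for each good $f$ and is the natural first step toward Question~\ref{q: Boolean} (replacing the range $(-1,1)$ by $\{-1,1\}$), which a pure projection argument cannot address. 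Your closing remarks about a Farkas-type correction are not needed for the theorem as stated.
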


To see how Theorem~\ref{thm: frequencies} yields Theorem~\ref{thm: non-uniqueness} in its functional form,
note that the set of frequencies of $f$ up to dimension $d$ uniquely determines the set of marginals of $f$ up to dimension $d$. For simplicity, consider the two-dimensional marginal of $f$ corresponding to setting the first coordinate to $1$ and second to $-1$. We claim this marginal can be expressed in terms of the frequencies $\ip{f}{w_J}$ up to dimension $\abs{J} \le 2$. To do so, consider the set $\Theta = \{\theta \in \{-1,1\}^p:\; \theta_1=1, \, \theta_2=-1\}$; then the marginal is 
\begin{align*} 
\frac{1}{2^p} \sum_{\theta \in \Theta} f(\theta) 
  = \ip{f}{\one_\Theta}
  &= \Bigip{f}{\Big( \frac{1+r_1}{2} \Big) \Big( \frac{1-r_2}{2} \Big)} \\
  &= \frac{1}{4} \Big( \ip{f}{1} + \ip{f}{r_1} - \ip{f}{r_2} - \ip{f}{r_1 r_2}\Big) \\
  &= \frac{1}{4} \Big( \ip{f}{w_\emptyset} + \ip{f}{w_{\{1\}}} - \ip{f}{w_{\{2\}}} - \ip{f}{w_{\{1,2\}}}\Big).
\end{align*}
More generally, consider the marginal of dimension $d$ corresponding to setting a given set of coordinates $\theta_{i_1}, \ldots, \theta_{i_d}$ to given numbers $\tau_1, \ldots, \tau_d$. Consider the set $\Theta = \{\theta \in \{-1,1\}^p:\; \theta_{i_1}=\tau_1, \ldots, \theta_{i_d}=\tau_d\}$; then the marginal is 
$$
\frac{1}{2^p} \sum_{\theta \in \Theta} f(\theta) 
  = \ip{f}{\one_\Theta}
  = \Bigip{f}{\Big( \frac{1+\tau_1 r_{i_1}}{2} \Big) \cdots \Big( \frac{1+\tau_d r_{i_1}}{d} \Big)}.
$$
Expanding the right hand side, we can express it in terms of the frequencies $\ip{f}{w_J}$ up to dimension $\abs{J} \le d$. This shows that Theorem~\ref{thm: frequencies} yields Theorem~\ref{thm: non-uniqueness} indeed.

\begin{question}	\label{q: Boolean}
	Can we replace the range $(-1,1)$ of $f'$ by $\{-1,1\}$ in Theorem~\ref{thm: frequencies}? In other words, is it true that most Boolean functions are not determined by frequencies up to almost half the dimension? Is half dimension an optimal threshold? 
\end{question}

\subsection*{Acknowledgement}
The author is grateful for Paata~Ivanisvili for enlightening discussions, and to Xinyuan~Xie for spotting a minor inaccuracy in the original proof.

\section{Background}

Our proof of Theorem~\ref{thm: frequencies} is
based on one combinatorial result about hyperplane arrangements
and one probabilistic result -- a version of Rudelson's sampling theorem.

\subsection{Hyperplane arrangements}

A hyperplane arrangement is a collection of $N$ hyperplanes in $\R^n$. 
Removing these hyperplanes from $\R^n$ leaves an open set, 
and the connected components of this set are called {\em regions}. 
Counting the regions of a given hyperplane arrangement is a well studied 
problem in enumerative combinatorics, see e.g. \cite{buck1943partition, winder1966partitions, zaslavsky1975facing, stanley2004introduction}. 

For our purposes, two simple observations of Yu.~Zuev \cite{zuev1991combinatorial} 
will be sufficient. Fix an arrangement of $N$ hyperplanes in $\R^n$. 
The intersection of any subfamily of these original hyperplanes is called 
{\em an intersection subspace}. The dimension of an intersection subspace 
can range from zero (a single point) to $n$, since intersecting an empty set of hyperplanes yields the entire space $\R^n$.

\begin{lemma}[Zuev \cite{zuev1991combinatorial}]		\label{lem: Zuev}
 For any hyperplane arrangement, the number of regions is bounded below 
 by the number of intersection subspaces.
\end{lemma}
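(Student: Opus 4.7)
The plan is to prove $R(\AA) \geq I(\AA)$, where $R(\AA)$ denotes the number of regions and $I(\AA)$ the number of intersection subspaces, by induction on the number $N$ of hyperplanes in $\AA$. The base case $N = 0$ is immediate: the empty arrangement has one region (all of $\R^n$) and one intersection subspace (the empty intersection, interpreted as $\R^n$).

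For the inductive step, I would fix any hyperplane $H \in \AA$ and consider two smaller arrangements: the deletion $\AA' = \AA \setminus \{H\}$, and the restriction $\AA^H = \{K \cap H : K \in \AA'\}$, viewed as an arrangement in $H \cong \R^{n-1}$ (coinciding hyperplanes identified, and empty intersections discarded). Both have at most $N-1$ hyperplanes, so the inductive hypothesis applies: $R(\AA') \geq I(\AA')$ and $R(\AA^H) \geq I(\AA^H)$.

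Two recursions combine to finish the argument. The classical deletion-restriction identity for regions gives $R(\AA) = R(\AA') + R(\AA^H)$, because reinserting $H$ into $\AA'$ splits precisely those regions of $\AA'$ that meet $H$, and the number of such splits equals the number of regions into which $\AA^H$ partitions $H$. In parallel, every intersection subspace of $\AA$ either does not involve $H$, in which case it already lies in $I(\AA')$, or has the form $L \cap H$ for some $L \in I(\AA')$, in which case it belongs to the intersection subspaces of $\AA^H$. This yields $I(\AA) \leq I(\AA') + I(\AA^H)$. Combining,
\[
R(\AA) = R(\AA') + R(\AA^H) \geq I(\AA') + I(\AA^H) \geq I(\AA),
\]
closing the induction.

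The step that requires the most care is the deletion-restriction identity for regions; it is classical, but deserves a clean bijective verification that each region of $\AA^H$ corresponds to exactly one region of $\AA'$ that gets split into two by $H$. The intersection-subspace bound is essentially bookkeeping, with the only subtlety being that intersection subspaces of $\AA'$ lying inside $H$ are counted twice on the right-hand side; since only an upper bound on $I(\AA)$ is needed, this overcounting is harmless (and in fact shows that the inequality $R(\AA) \geq I(\AA)$ can be strict even when both intermediate inequalities are tight).
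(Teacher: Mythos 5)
Your proof is correct. Note that the paper does not actually prove this lemma --- it is quoted from Zuev's paper with a citation, so there is no internal argument to compare against; your deletion--restriction induction supplies a complete, self-contained proof. The two inequalities you combine are both sound: the identity $R(\AA) = R(\AA') + R(\AA^H)$ is the classical deletion--restriction recursion (your bijection between regions of $\AA^H$ and regions of $\AA'$ that meet $H$ is the right way to verify it, using convexity of regions to see that each such region of $\AA'$ is split into exactly two pieces and that distinct regions of $\AA^H$ cannot lie in the same region of $\AA'$), and the subadditivity $I(\AA) \le I(\AA') + I(\AA^H)$ follows from your case split on whether $H$ participates in the intersection, with the double-counting of subspaces contained in $H$ harmless since only an upper bound is needed. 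One bookkeeping point worth making explicit: since the restriction lowers the ambient dimension, the induction should be phrased over all arrangements of at most $N-1$ hyperplanes in an arbitrary ambient space, which your base case ($N=0$ in any $\R^m$) already accommodates. For context, the usual ``slick'' derivation of Zuev's bound goes through Zaslavsky's theorem, $R(\AA) = \sum_{X \in L(\AA)} \abs{\mu(\hat{0},X)} \ge \abs{L(\AA)}$, but Zaslavsky's theorem is itself proved by the same deletion--restriction recursion, so your argument is arguably the more elementary route and proves exactly what is needed.
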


For example, an arrangement of $N$ hyperplanes in general position in $\R^n$ produces exactly $\binom{N}{\le n}$ intersection subspaces, 
since intersecting any subset of such hyperplanes of cardinality at most $n$ yields a different subspace. Zuev's Lemma~\ref{lem: Zuev} implies that such hyperplane arrangement has at least $\binom{N}{\le n}$ regions. This bound is in fact an identity, 
since any arrangement of $N$ hyperplanes in $\R^n$ has at most 
$\binom{N}{\le n}$ regions \cite{buck1943partition}; see also \cite[Proposition~2.4]{stanley2004introduction}.

In general, it could be hard to count intersection subspaces directly. 
This task, however, can be facilitated by the following simple observation. 
For convenience, let us focus here on {\em central} hyperplane arrangements,
those where all the hyperplanes pass through the origin. 
A central arrangement can be expressed in the form\footnote{Throughout the paper, $x^\perp$ denotes the hyperplane orthogonal to the vector $x$ in $\R^n$. More generally, for a subset $E \subset \R^n$,  we denote by $E^\perp$ the set of vectors in $\R^n$ that are orthogonal to all vectors in $E$. In particular, if $E$ is a linear subspace, $E^\perp$ is its orthogonal complement.}
$\{x_1^\perp,\ldots,x_N^\perp\}$ where 
$x_i \in \R^n$ is a vector orthogonal to the $i$-th hyperplane.

\begin{definition}[Resilience]
	Fix a system of vectors $x_1,\ldots,x_N \in \R^n$.
	We call a subset $I \subset [N]$ {\em resilient} 
	if the linear span of $\{x_i\}_{i \in I}$ does not contain any vector from $\{x_i\}_{i \in I^c}$.
\end{definition}

\begin{lemma}[Implicit in Zuev \cite{zuev1991combinatorial}]		\label{lem: intersections via resilience}
  For any central hyperplane arrangement $\{x_1^\perp,\ldots,x_N^\perp\}$, 
  the number of intersection subspaces is bounded below
  by the number of resilient subsets.
\end{lemma}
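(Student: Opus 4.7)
The plan is to exhibit an injection from the set of resilient subsets into the set of intersection subspaces. Recall that for a central arrangement, intersecting any subfamily of hyperplanes gives
$$
\bigcap_{i \in I} x_i^\perp = \bigl( \Span\{x_i : i \in I\} \bigr)^\perp,
$$
so intersection subspaces are precisely the subspaces $V_I^\perp$, where $V_I := \Span\{x_i : i \in I\}$ ranges over subsets $I \subset [N]$. Since taking orthogonal complements is a bijection on linear subspaces of $\R^n$, it suffices to show that the map $I \mapsto V_I$, restricted to resilient subsets, is injective.

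To prove injectivity, suppose $I$ and $J$ are distinct resilient subsets of $[N]$. Without loss of generality there exists an index $k \in I \setminus J$. Then $x_k \in V_I$ by definition, while $k \in J^c$. If we had $V_I = V_J$, then $x_k$ would lie in $V_J = \Span\{x_i : i \in J\}$, producing a vector from $\{x_i\}_{i \in J^c}$ inside the span of $\{x_i\}_{i \in J}$, which contradicts the resilience of $J$. Hence $V_I \ne V_J$, and therefore $V_I^\perp \ne V_J^\perp$.

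Thus distinct resilient subsets give rise to distinct intersection subspaces, which proves the lower bound. The argument is essentially a one-line observation once the right correspondence (span $\leftrightarrow$ orthogonal complement) is set up, so there is no substantial obstacle; the only thing to be careful about is the direction of the containment in the definition of resilience, namely that a resilient $I$ is allowed to have dependencies among $\{x_i\}_{i \in I}$ themselves, but no vector $x_j$ with $j \notin I$ lies in $V_I$, which is exactly what the above argument exploits.
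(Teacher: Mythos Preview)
Your proof is correct and follows essentially the same approach as the paper: both construct an injection from resilient subsets to intersection subspaces by showing that distinct resilient subsets $I$ and $J$ have distinct spans $V_I \ne V_J$, using that any $k \in I \setminus J$ gives $x_k \in V_I$ but $x_k \notin V_J$ by resilience of $J$, and then passing to orthogonal complements. Your write-up is in fact slightly more explicit than the paper's about the identification $\bigcap_{i\in I} x_i^\perp = V_I^\perp$ and about the WLOG step.
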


\begin{proof}
Any pair of distinct resilient subsets $I$ and $J$ satisfies 
$$
\Span\{x_i\}_{i \in I} \ne \Span\{x_j\}_{j \in J},
$$
since for any $i_0 \in I \setminus J$, the definition of resilience 
yields $x_{i_0} \not\in \Span\{x_j\}_{j \in J}$.
Hence the orthogonal complements of $\Span\{x_i\}_{i \in I}$ and $\Span\{x_j\}_{j \in J}$ are different, which we can write as
$$
\bigcap_{i \in I} x_i^\perp \ne \bigcap_{j \in j} x_j^\perp.
$$
Each side of this relation defines an intersection subspace. 
Thus, we obtained an injection 
from resilient subsets to intersection subspaces. 
The proof is complete.
\end{proof}

\subsection{Sampling}

In addition to hyperplane arrangements, our argument uses a version of Rudelson's sampling theorem \cite{rudelson1999random}; see also \cite[Theorem~5.6.1]{vershynin2018high}. The version we need can be most conveniently deduced from {\em matrix Bernstein inequality}, which is due to J.~Tropp \cite{tropp2012user}; see also \cite[Theorem~5.4.1]{vershynin2018high} for an exposition.

\begin{theorem}[Matrix Bernstein's inequality]		\label{thm: matrix Bernstein}
  Let $Z_1,\ldots,Z_N$ be independent, mean zero, $k \times k$ symmetric random matrices, 
  such that $\|Z_i\| \le M$ almost surely for all $i$.
  Then, for every $t \ge 0$, we have
  $$
  \P \Big\{ \norm[3]{\sum_{i=1}^N Z_i} \ge t \Big\} 
  \le 2k \exp \Big( -\frac{t^2/2}{\s^2 + Mt/3} \Big),
  $$
  where $\s^2 = \norm{\sum_{i=1}^N \E Z_i^2}$.
  Here $\norm{\cdot}$ denotes the operator norm of a matrix.
\end{theorem}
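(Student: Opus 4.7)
The plan is to follow the matrix Laplace transform method, a noncommutative analogue of the scalar proof of Bernstein's inequality. Setting $S = \sum_{i=1}^N Z_i$, the first reduction is cosmetic: because $S$ is symmetric, $\|S\| = \max(\lambda_{\max}(S), \lambda_{\max}(-S))$, so a union bound over $S$ and $-S$ (the latter is a sum of independent mean-zero matrices with the same norm bound and the same variance $\s^2$) reduces the problem to a one-sided eigenvalue tail bound with $k$ in front of the exponential; the factor $2$ arises from this union bound.

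For the one-sided bound, exponentiate and apply Markov's inequality. For any $\theta > 0$,
$$
\P\{\lambda_{\max}(S) \geq t\} \leq e^{-\theta t}\, \E \lambda_{\max}(e^{\theta S}) \leq e^{-\theta t}\, \E \tr e^{\theta S},
$$
using that $e^{\theta S}$ is positive semidefinite. The nontrivial step is controlling $\E \tr e^{\theta S}$ for a sum of independent matrices, despite the fact that $e^{A+B} \neq e^A e^B$ for noncommuting symmetric $A, B$. This is done via Lieb's theorem: the map $H \mapsto \tr \exp(H + \log A)$ is concave on symmetric matrices for every positive definite $A$. Conditioning on $Z_N, Z_{N-1}, \ldots, Z_1$ in turn and applying Jensen's inequality through Lieb yields the subadditivity of matrix cumulants,
$$
\E \tr \exp\Big( \theta \sum_{i=1}^N Z_i \Big) \leq \tr \exp\Big( \sum_{i=1}^N \log \E e^{\theta Z_i} \Big).
$$

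It remains to bound the matrix cumulant $\log \E e^{\theta Z_i}$ using only $\E Z_i = 0$ and $\|Z_i\| \leq M$. The scalar inequality $e^x \leq 1 + x + (x^2/2)/(1 - x/3)$, valid for $x < 3$, lifts to symmetric matrices via functional calculus. Combined with $\E Z_i = 0$ and the operator monotonicity of $\log$ (so $\log(\Id + A) \preceq A$), this gives
$$
\log \E e^{\theta Z_i} \preceq \frac{\theta^2/2}{1 - \theta M/3}\, \E Z_i^2, \qquad 0 < \theta < 3/M.
$$
Summing, feeding back into the subadditivity inequality, and using monotonicity of $A \mapsto \tr e^A$ with respect to the semidefinite order yields $\E \tr e^{\theta S} \leq k \exp\!\big(\theta^2 \s^2/(2(1 - \theta M/3))\big)$. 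Substituting into the Markov estimate and optimizing over $\theta$ (the standard Bernstein choice is $\theta = t/(\s^2 + Mt/3)$) produces the stated tail bound.

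The main obstacle --- and the only place where genuine noncommutativity enters --- is the Lieb-concavity step that furnishes subadditivity of the matrix cumulant generating function. In the scalar setting, independence gives $\E e^{\theta \sum X_i} = \prod_i \E e^{\theta X_i}$ for free; in the matrix setting this identity fails and must be replaced by the trace inequality above, whose proof rests on Lieb's 1973 theorem on concavity of the trace exponential. Once that ingredient is in hand, every other step --- the Markov/Chernoff reduction, the scalar-to-matrix lifting of the cumulant bound, and the Bernstein optimization in $\theta$ --- is a direct noncommutative translation of the classical scalar argument.
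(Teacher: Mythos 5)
The paper does not prove this theorem: it is quoted verbatim from Tropp, with the proof delegated to the cited references (Tropp's paper and Theorem~5.4.1 of \emph{High-Dimensional Probability}). Your outline is precisely the argument given there --- the Chernoff/trace reduction, Lieb-concavity yielding subadditivity of the matrix cumulant generating function, the lift of $e^x \le 1 + x + \frac{x^2/2}{1-x/3}$ to a bound $\log \E e^{\theta Z_i} \preceq \frac{\theta^2/2}{1-\theta M/3}\,\E Z_i^2$, and the Bernstein choice $\theta = t/(\s^2 + Mt/3)$ --- and each step, including the final optimization, checks out. So the proposal is a correct reconstruction of the standard proof that the paper's citation points to.
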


We are ready to state and prove a version of Rudelson's sampling theorem.

\begin{theorem}[Sampling with independent selectors]		\label{thm: sampling with selectors}
  Let $x_1,\ldots,x_K \in \R^k$ be vectors satisfying
  $$
  \frac{1}{K} \sum_{i=1}^K x_i x_i^\tran = \Id
  $$
  and such that $\norm{x_i}_2 \le 10\sqrt{k}$ for all $i$.
  Let $K \ge N \ge C k \log k$ where $C$ is a sufficiently large absolute constant.
  Consider independent Bernoulli random variables $\d_1,\ldots,\d_K$ satisfying 
  $\E \d_i = N/K$. Then 
  $$
  0.99 \cdot \Id \preceq \frac{1}{N} \sum_{i=1}^K \d_i x_i x_i^\tran \preceq 1.01 \cdot \Id
  $$
  with probability at least $1-\frac{1}{4} k^{-10}$.
\end{theorem}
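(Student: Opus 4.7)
The plan is to rewrite the inequality as a statement about an operator-norm deviation of a sum of independent symmetric random matrices, and then apply the matrix Bernstein inequality (Theorem~\ref{thm: matrix Bernstein}) directly. Specifically, I would set
$$
Z_i = \frac{1}{N}\delta_i \, x_i x_i^\tran - \frac{1}{K} x_i x_i^\tran, \qquad i=1,\ldots,K.
$$
The matrices $Z_i$ are independent, symmetric, and mean zero, and the isotropy hypothesis gives
$$
\sum_{i=1}^K Z_i = \frac{1}{N}\sum_{i=1}^K \delta_i \, x_i x_i^\tran - \Id.
$$
Thus the two-sided inequality in the conclusion is equivalent to $\bigl\|\sum_i Z_i\bigr\| \le 0.01$, and it suffices to bound the tail $\P\{\|\sum_i Z_i\|\ge 0.01\}$.

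To apply matrix Bernstein, I would verify the two required quantities. For the uniform bound, $\|x_i x_i^\tran\| = \|x_i\|_2^2 \le 100k$, so the triangle inequality gives $\|Z_i\| \le \frac{100k}{N} + \frac{100k}{K} \le \frac{200k}{N}$, giving an admissible $M = 200k/N$. For the variance proxy, I would use $Z_i^2 = \bigl(\frac{\delta_i}{N}-\frac{1}{K}\bigr)^2 \|x_i\|_2^2 \, x_i x_i^\tran$, take expectations, and compute
$$
\E\!\left(\frac{\delta_i}{N}-\frac{1}{K}\right)^{\!2} = \frac{1}{N^2} \cdot \frac{N}{K}\left(1-\frac{N}{K}\right) \le \frac{1}{NK}.
$$
Combining with $\|x_i\|_2^2 \le 100k$ gives $\E Z_i^2 \preceq \frac{100k}{NK}\, x_i x_i^\tran$, and then the isotropy hypothesis $\frac{1}{K}\sum_i x_i x_i^\tran = \Id$ yields
$$
\sigma^2 = \Bigl\|\sum_{i=1}^K \E Z_i^2\Bigr\| \le \frac{100k}{N}.
$$

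With $M = O(k/N)$ and $\sigma^2 = O(k/N)$, matrix Bernstein at $t=0.01$ gives
$$
\P\Bigl\{\|\textstyle\sum_i Z_i\|\ge 0.01\Bigr\} \le 2k\exp\!\left(-\frac{c N}{k}\right)
$$
for some absolute constant $c>0$. Choosing the constant $C$ in the hypothesis $N\ge Ck\log k$ large enough makes the right-hand side at most $\frac{1}{4}k^{-10}$, finishing the argument.

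I do not expect a serious obstacle: matrix Bernstein does all of the heavy lifting, and the only things to watch are (i) keeping the two contributions to $\|Z_i\|$ and $\E Z_i^2$ on the same scale, which is ensured because $N\le K$, and (ii) correctly extracting the factor $\|x_i\|_2^2$ from $(x_i x_i^\tran)^2$ so that the bound on $\sigma^2$ reduces, via isotropy, to a scalar multiple of the identity rather than something dimension-dependent. These are routine bookkeeping steps rather than real difficulties.
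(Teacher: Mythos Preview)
Your proposal is correct and follows essentially the same route as the paper: both apply matrix Bernstein to the centered rank-one matrices $(\delta_i - N/K)\,x_ix_i^\tran$ (you absorb an extra $1/N$ into the definition of $Z_i$, which is only a normalization choice), bound $M$ via $\|x_i\|_2^2 \le 100k$ and $\sigma^2$ via $(x_ix_i^\tran)^2 = \|x_i\|_2^2 \, x_ix_i^\tran$ together with isotropy, and then conclude with $N \ge Ck\log k$. The constants differ only by harmless factors (e.g.\ your $M=200k/N$ from the triangle inequality versus the paper's $M=100k$ before rescaling).
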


\begin{proof}
We are going to apply matrix Bernstein inequality for the random matrices
$$
Z_i \coloneqq (\d_i-\d) x_i x_i^\tran, 
\quad \text{where } 
\d \coloneqq \E \d_i = \frac{N}{K}.
$$
By assumption, we have
$$
\norm{Z_i} 
= \norm[1]{x_i x_i^\tran}
= \norm{x_i}_2^2
\le 100k
\eqqcolon M.
$$
Furthermore, 
$$
0 \preceq \E Z_i^2 
= \d(1-\d) \norm{x_i}_2^2\, x_i x_i^\tran 
\preceq 100 \d k \cdot x_i x_i^\tran.
$$
Thus 
$$
\s^2 
= \norm[3]{\sum_{i=1}^N \E Z_i^2} 
\le 100 \d k \norm[3]{\sum_{i=1}^K x_i x_i^\tran}
= 100 \d k K 
= 100 k N.
$$
Applying matrix Bernstein inequality (Theorem~\ref{thm: matrix Bernstein}) 
and using the bounds on $M$ and $\sigma$, we obtain 
\begin{align*} 
\Pr{ \norm[3]{\frac{1}{N} \sum_{i=1}^N \d_i x_i x_i^\tran - \Id} \ge 0.01}
  &= \Pr{ \norm[3]{\sum_{i=1}^N Z_i} \ge 0.01N} \\
  &\le 2k \cdot \exp \Big( - \frac{cN}{k} \Big)
  \le \frac{1}{4} k^{-10}
\end{align*}
where the last inequality is guaranteed if $N \ge C k \log k$ with a sufficiently 
large absolute constant $C$.
This completes the proof.
\end{proof}

We will need a version of sampling theorem for a similar but not identical model of {\em sampling without replacement}.

\begin{theorem}[Sampling without replacement]		\label{thm: sampling without replacement}
  Let $x_1,\ldots,x_K \in \R^k$ be vectors satisfying
  $$
  \frac{1}{K} \sum_{i=1}^K x_i x_i^\tran = \Id
  $$
  and such that $\norm{x_i}_2 \le 10\sqrt{k}$ for all $i$.
  Let $N \ge C k \log k$ where $C$ is a sufficiently large absolute constant.
  Let $I$ be a random subset of $[K]$ with cardinality $\abs{I} = N$. 
  Then
  $$
  0.9 \cdot \Id \preceq \frac{1}{N} \sum_{i \in I} x_i x_i^\tran \preceq 1.1 \cdot \Id
  $$
  with probability at least $1-k^{-10}$.
\end{theorem}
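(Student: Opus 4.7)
My plan is to deduce Theorem~\ref{thm: sampling without replacement} from Theorem~\ref{thm: sampling with selectors} by coupling the uniform $N$-subset $I$ to an independent Bernoulli selection and then conditioning on the cardinality. Introduce independent Bernoulli random variables $\d_1, \ldots, \d_K$ with $\E \d_i = N/K$, and let $J := \{i \in [K] : \d_i = 1\}$. Conditional on $|J| = N$, the set $J$ is uniformly distributed among the $N$-subsets of $[K]$, so it has the same distribution as $I$. Hence for any event $\EE$ defined on subsets of $[K]$,
$$
\Pr{\EE(I)} = \Pr{\EE(J) \mid |J| = N} \le \frac{\Pr{\EE(J)}}{\Pr{|J| = N}}.
$$

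I will carry out the rest in three steps. \emph{Step 1:} take $\EE(S)$ to be the event that $\norm{\frac{1}{N}\sum_{i \in S} x_i x_i^\tran - \Id} > 0.1$. Rerunning the matrix Bernstein computation from the proof of Theorem~\ref{thm: sampling with selectors}, but with threshold $t = 0.1 N$ in place of $0.01 N$, yields $\Pr{\EE(J)} \le 2k \exp(-c_0 N / k)$ for an absolute constant $c_0 > 0$. \emph{Step 2:} since $|J| \sim \mathrm{Bin}(K, N/K)$ has mode $N$, a short Stirling computation gives $\Pr{|J| = N} \ge c_1 / \sqrt{N}$ for an absolute constant $c_1 > 0$, valid for all $1 \le N \le K$. \emph{Step 3:} combine, obtaining
$$
\Pr{\EE(I)} \le \frac{2k \sqrt{N}}{c_1} \exp(-c_0 N / k),
$$
and conclude that the right hand side is at most $k^{-10}$ by taking the constant $C$ in the hypothesis $N \ge C k \log k$ large enough.

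The only delicate point will be verifying that the exponential $\exp(-c_0 N / k)$ swamps the polynomial factor $2k \sqrt{N}/c_1$ uniformly in $N$, not just near the threshold $C k \log k$. For $N \le k^A$ with any fixed $A$ this is routine, as one can simply take $C$ to be a suitable function of $A$ and $c_0$. For super-polynomially large $N$ the exponential decay is so strong that polynomial factors are negligible, but since no upper bound on $K$ (and hence on $N$) is imposed, this case still warrants a brief separate check. Apart from this bookkeeping, the argument reduces to two routine pieces: the Bernoulli-to-fixed-size coupling, and the matrix Bernstein bound already developed for Theorem~\ref{thm: sampling with selectors}.
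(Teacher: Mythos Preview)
Your argument is correct, but it takes a different route from the paper's. You condition the Bernoulli sample $J$ on the event $\{|J|=N\}$, which reproduces the uniform $N$-subset exactly, and pay a factor $1/\Pr{|J|=N} \asymp \sqrt{N}$ that is harmlessly absorbed by the exponential term from matrix Bernstein. The paper instead builds a \emph{monotone coupling}: it runs the Bernoulli selectors with mean $0.99N/K$, so that with high probability the resulting set $I_0$ has $|I_0|\le N$; it then enlarges $I_0$ to a uniform $N$-subset $I\supset I_0$ and uses that $\sum_{i\in S}x_i x_i^\tran$ is monotone in $S$ to transfer the lower Loewner bound from $I_0$ to $I$. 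The upper bound is obtained by a symmetric run with mean $1.01N/K$ and the reverse inclusion.

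Your conditioning approach is the more standard and more portable device (it would work for any bad event, not just one that is monotone in the index set), and it handles both sides of the two-sided bound in one stroke. The paper's coupling exploits the positive-semidefinite structure specific to this problem; it avoids the Stirling lower bound on $\Pr{|J|=N}$ and the extra $\sqrt{N}$ factor, trading them for a Chernoff bound on $|I_0|$ and two separate applications of Theorem~\ref{thm: sampling with selectors}. Your worry about ``super-polynomially large $N$'' is not really a difficulty: the function $N\mapsto \sqrt{N}\,e^{-c_0 N/k}$ is decreasing for $N$ above $k/(2c_0)$, so once $N\ge Ck\log k$ the bound is largest at the threshold and the required inequality follows by choosing $C$ large.
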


\begin{proof}
Apply Theorem~\ref{thm: sampling with selectors} for $0.99N$ instead of $N$. 
It follows that a random set 
$$
I_0 \coloneqq \left\{ i:\; \d_i = 1 \right\} \subset [K]
$$
satisfies 
\begin{equation}	\label{eq: I0 succeq}
\frac{1}{0.99N} \sum_{i \in I_0} x_i x_i^\tran
\succeq 0.99 \cdot \Id
\end{equation}
with probability at least $1-\frac{1}{4} k^{-10}$.

Since $\E \d_i = 0.99N/K$, the expected cardinality of the set $I_0$ is $\E \abs{I_0} = 0.99N$.
Moreover, Chernoff inequality (see e.g. Exercise 2.3.5 in my book) implies that $\abs{I_0}$ 
is concentrated around its expectation, and in particular
\begin{equation}	\label{eq: I0 card}
\abs{I_0} \le N
\end{equation}
with probability at least $1-2e^{-cN}$.

Let us create a random set $I$ of cardinality exactly $N$ from $I_0$ by the following rule. 
If $\abs{I_0} < N$, add to $I_0$ exactly $N-\abs{I_0}$ elements chosen from $[K] \setminus I_0$ at random and without replacement. If $\abs{I_0} > N$, remove from $I_0$ exactly $\abs{I_0}-N$ elements chosen at random and without replacement. Clearly, $I$ obtained this way is a random subset of $[K]$ of cardinality $\abs{I} = N$.

Suppose $I_0$ satisfies both \eqref{eq: I0 succeq} and \eqref{eq: I0 card}; 
this occurs with probability at least $1-\frac{1}{4} k^{-10} - 2e^{-cN} \ge 1-\frac{1}{2} k^{-10}$.
In this case, $I \supset I_0$ and so
$$
\frac{1}{N} \sum_{i \in I} x_i x_i^\tran
\succeq \frac{1}{N} \sum_{i \in I_0} x_i x_i^\tran
\succeq 0.99^2 \cdot \Id
\succeq 0.9 \cdot \Id.
$$

A similar argument but for $1.01N$ instead of $0.99N$ yields 
$$
\frac{1}{N} \sum_{i \in I} x_i x_i^\tran
\preceq (1.01)^2 \cdot \Id 
\preceq 1.1 \cdot \Id
$$
with probability at least $1-\frac{1}{2} k^{-10}$. 
Taking the intersection of the two bounds completes the proof. 
\end{proof}

\section{Proof of Theorem~\ref{thm: frequencies}}	\label{s: proof of frequencies}

\subsection{Reduction to sign patterns}		\label{s: reduction to sig patterns}

Fix $d<p$. Let us call a function  
$f: \{-1,1\}^p \to \{-1,1\}$ {\em non-unique} if there exists a function $f': \{-1,1\}^p \to (-1,1)$
that has the same frequencies as $f'$ up to dimension $d$. 
Our goal is to show that most of the functions $f$ are non-unique for $d = 0.49p$.

Consider the following linear subspace of real-valued functions on the Boolean cube:
\begin{equation}	\label{eq: H}
\H \coloneqq \left\{ h: \{-1,1\}^p \to \R: \; \ip{h}{w_J} = 0 \quad \forall J \subset [p],\; \abs{J} \le d \right\}.
\end{equation}

\begin{lemma}	\label{lem: H}
  $f$ is non-unique if $f \equiv \sign h$ for some $h \in \H$.
\end{lemma}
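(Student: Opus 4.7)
The plan is to exhibit $f'$ as a small linear perturbation of $f$ in the direction of $h$ itself. Concretely, I would set
$$
f' \coloneqq f - \e h
$$
for a small constant $\e>0$ to be chosen. Two things need to be verified: that $f'$ takes values in $(-1,1)$, and that $f'$ has the same frequencies as $f$ up to dimension $d$. The second is immediate: by linearity of the inner product, for every $J \subset [p]$ with $|J| \le d$ we get $\ip{f'}{w_J} = \ip{f}{w_J} - \e \ip{h}{w_J} = \ip{f}{w_J}$, because $h \in \H$ kills all such Walsh coefficients by definition \eqref{eq: H}. So the whole content of the lemma reduces to the pointwise range condition.

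For the range, I would exploit the assumption $f \equiv \sign h$. Since $f$ takes values in $\{-1,1\}$, this forces $h(\theta) \ne 0$ at every point of the cube, and moreover $h(\theta) = f(\theta)\,|h(\theta)|$. Substituting,
$$
f'(\theta) = f(\theta) - \e h(\theta) = f(\theta)\bigl(1 - \e |h(\theta)|\bigr).
$$
The domain $\{-1,1\}^p$ is finite, so $\|h\|_\infty = \max_\theta |h(\theta)|$ is a finite positive number. Choosing any $\e$ with $0 < \e < \|h\|_\infty^{-1}$ then guarantees $0 < \e |h(\theta)| < 1$ for every $\theta$, which yields $|f'(\theta)| = 1 - \e|h(\theta)| \in (0,1)$ and hence $f'(\theta) \in (-1,1)$, as required.

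There is no real obstacle here: the lemma is a short linear-algebra observation whose only subtle point is that $\sign h = f$ with $f \in \{-1,1\}$ automatically rules out zeros of $h$, which is exactly what lets the perturbation $f' = f - \e h$ stay strictly inside the open interval rather than touching the endpoints $\pm 1$. The real work of the paper will lie not in this reduction but in producing, for most $f$, a nonvanishing $h \in \H$ whose sign pattern equals $f$; that is where the hyperplane-arrangement count (Lemma~\ref{lem: intersections via resilience}) and the sampling theorem (Theorem~\ref{thm: sampling without replacement}) are presumably deployed.
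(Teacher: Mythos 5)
Your proof is correct and follows the same route as the paper: both define $f' = f - \e h$, note that membership of $h$ in $\H$ preserves all low-dimensional frequencies, and use $f \equiv \sign h$ (which forces $h$ to be nonvanishing) to keep $f'$ strictly inside $(-1,1)$ for small $\e$. Your explicit choice $\e < \|h\|_\infty^{-1}$ is just a slightly more quantitative version of the paper's ``$\e$ sufficiently small.''
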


\begin{proof}
Suppose $f \equiv \sign h$ for some $h \in \H$.
Let $\e>0$ be small enough and set $f' \coloneqq f-\e h$.
By definition of $\H$, the functions $f$ and $f'$ have the same frequencies 
up to dimension $d$. 
Moreover, if $f(\theta)=1$ then $h(\theta)>0$ and hence $f'(\theta)<1$.
Similarly, if $f(\theta)=-1$ then $h(\theta)<0$ and hence $f'(\theta)>-1$.
Therefore, if $\e$ is sufficiently small, $f'(\theta) \in (-1,1)$ for any $\theta$.
\end{proof}

Lemma~\ref{lem: H} shows that the number of non-unique functions $f$ 
is bounded below by the number of functions of the form $\sign h$ where $h \in \H$, 
which we call {\em sign patterns} generated by $\H$.
Hence, in order to show that most of the functions $f$ are non-unique, 
it suffices to show that the subspace $\H$ generates a lot of sign patterns. 

\subsection{From sign patterns to hyperplane arrangements}	\label{s: from signs to arrangements}
To count different sign patterns generated by $\H$, 
we express this as a problem about hyperplane arrangements.

Let $e_\theta : \{-1,1\}^p \to \{0,1\}$ denote the point evaluation function at $\theta \in \{-1,1\}^p$.
Consider two functions $h, h' \in \H$. We have $\sign h \not\equiv \sign h'$ if and only if 
there exists $\theta \in \{-1,1\}^p$ such that 
$\ip{h}{e_\theta}$ and $\ip{h'}{e_\theta}$ have opposite signs. This happens if and only if 
$h$ and $h'$ are separated by at least one hyperplane $e_\theta^\perp \cap \H$ in $\H$.
The latter is equivalent to $h$ and $h'$ lying in different regions of the hyperplane arrangement 
$\{e_\theta^\perp \cap \H\}_{\theta \in \{-1,1\}^p}$ in the subspace $\H$.
Therefore, the number of different sign patterns generated by $\H$ 
(and thus also the number non-unique functions $f$) is bounded
below by the number of regions of the hyperplane arrangement
$\{e_\theta^\perp \cap \H\}_{\theta \in \{-1,1\}^p}$ in the subspace $\H$.

Denoting by $P_\H$ the orthogonal projection onto $\H$, we see that the vector
$P_\H e_\theta$ is orthogonal to the hyperplane $e_\theta^\perp \cap \H$ in $\H$.
Now apply Lemmas~\ref{lem: Zuev} and \ref{lem: intersections via resilience} 
for our hyperplane arrangement in $\H$. We conclude that the number of regions 
(and thus also the number of sign patterns generated by $\H$, and thus also the number
of non-unique functions $f$) is bounded below by the 
number of resilient subsets for the collection 
$\{P_\H e_\theta\}_{\theta \in \{-1,1\}^p}$.

\subsection{A necessary condition for non-resilience}		\label{s: resilience necessary}

To complete the proof, we will show that most of the subsets are resilient.
Let us see what happens when the complement $\Theta^c$ of some subset
$\Theta \subset \{-1,1\}^p$ is {\em not} resilient.
By definition, there exists $\theta_0 \in \Theta$ such that
$$
P_\H e_{\theta_0} \in \Span \{P_\H e_\theta\}_{\theta \in \Theta^c}.
$$
This means that there exist real numbers $(a_\theta)_{\theta \in \Theta^c}$ such that
$$
P_\H e_{\theta_0} = \sum_{\theta \in \Theta^c} a_\theta P_\H e_\theta.
$$
This in turn means that 
$$
e_{\theta_0} - \sum_{\theta \in \Theta^c} a_\theta e_\theta
\in \H^\perp 
= \Span \left\{ w_J: J \subset [p],\; \abs{J} \le d \right\}.
$$
Therefore, there exist real numbers $(b_J)_{J \subset [p],\; \abs{J} \le d}$ such that
$$
e_{\theta_0} - \sum_{\theta \in \Theta^c} a_\theta e_\theta
= \sum_{J \subset [p],\; \abs{J} \le d} b_J w_J.
$$
If we evaluate this identity at any point $\theta \in \Theta \setminus \{\theta_0\}$, 
the left hand side of it vanishes, and we have
$$
\sum_{J \subset [p],\; \abs{J} \le d} b_J w_J(\theta) = 0.
$$
Express this identity this as an orthogonality relation in $\R^{\binom{p}{\le d}}$, namely 
$$
\ip{b}{w(\theta)} = 0
$$
where 
\begin{equation}	\label{eq: Rpd}
b = (b_J)_{J \subset [p],\; \abs{J} \le d}
\quad \text{and} \quad
w(\theta) \coloneqq (w_J(\theta))_{J \subset [p],\; \abs{J} \le d}.
\end{equation}

Summarizing, we showed the following. 

\begin{lemma}[A necessary condition for non-resilience]	\label{lem: resilience necessary}
  If $\Theta^c$ is {\em not} resilient for some $\Theta \subset \{-1,1\}^p$
  then there exists $\theta_0 \in \Theta$ and $b \in \R^{\binom{p}{\le d}}$ such that 
  $$
  \ip{w(\theta)}{b} = 0
  \quad \text{for all } \theta \in \Theta \setminus \{\theta_0\}.
  $$
\end{lemma}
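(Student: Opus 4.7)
The plan is to unwind the definition of non-resilience applied to the system $\{P_\H e_\theta\}_{\theta \in \{-1,1\}^p}$, convert the resulting linear dependence into an identity living in $\H^\perp$, and exploit the fact that $\H^\perp$ is exactly spanned by the low-degree Walsh functions. The payoff is then extracted by evaluating that identity pointwise on $\Theta \setminus \{\theta_0\}$, where the left-hand side, being a linear combination of point-evaluations $e_\theta$, automatically vanishes.

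First I would observe that $\Theta^c$ being non-resilient means, by the definition of resilience applied to the index set $\Theta^c$, that the linear span of $\{P_\H e_\theta\}_{\theta \in \Theta^c}$ contains some $P_\H e_{\theta_0}$ with $\theta_0 \in (\Theta^c)^c = \Theta$. Fixing scalars $(a_\theta)_{\theta \in \Theta^c}$ that realize this dependence, the relation $P_\H e_{\theta_0} = \sum_{\theta \in \Theta^c} a_\theta P_\H e_\theta$ can be rewritten as
$$
e_{\theta_0} - \sum_{\theta \in \Theta^c} a_\theta e_\theta \in \H^\perp.
$$
Since $\H$ is defined in \eqref{eq: H} as the orthogonal complement inside $L^2(\{-1,1\}^p)$ of $\Span\{w_J : |J| \le d\}$, the orthogonal complement $\H^\perp$ coincides with $\Span\{w_J : |J| \le d\}$. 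Hence there exist scalars $(b_J)_{|J| \le d}$ such that
$$
e_{\theta_0} - \sum_{\theta \in \Theta^c} a_\theta e_\theta = \sum_{|J| \le d} b_J w_J.
$$

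To finish, I would evaluate this identity at an arbitrary $\theta \in \Theta \setminus \{\theta_0\}$. On the left, $e_{\theta_0}(\theta) = 0$ because $\theta \ne \theta_0$, and $e_{\theta'}(\theta) = 0$ for every $\theta' \in \Theta^c$ because $\theta \in \Theta$, so the left-hand side is zero. On the right, the sum is exactly $\ip{w(\theta)}{b}$ with $b$ and $w(\theta)$ as in \eqref{eq: Rpd}. Therefore $\ip{w(\theta)}{b} = 0$ for every $\theta \in \Theta \setminus \{\theta_0\}$, which is the required conclusion.

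The argument is essentially a chain of definition-unwinding steps, so there is no substantive obstacle; the two places that benefit from a sanity check are the assertion $\theta_0 \in \Theta$, which relies only on $(\Theta^c)^c = \Theta$, and the identification $\H^\perp = \Span\{w_J : |J| \le d\}$, which is immediate from the defining relation of $\H$ together with the fact that the Walsh functions form an orthonormal basis of $L^2(\{-1,1\}^p)$.
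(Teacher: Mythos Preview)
Your proof is correct and follows essentially the same approach as the paper: unwind non-resilience to get $P_\H e_{\theta_0} \in \Span\{P_\H e_\theta\}_{\theta \in \Theta^c}$, pass to $\H^\perp = \Span\{w_J : |J|\le d\}$, and evaluate the resulting identity pointwise on $\Theta \setminus \{\theta_0\}$. The only thing worth noting (implicit in both your argument and the paper's) is that the resulting $b$ is automatically nonzero, since otherwise $e_{\theta_0}$ would be a linear combination of $\{e_\theta\}_{\theta \in \Theta^c}$, contradicting the linear independence of the point-evaluation functions.
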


\subsection{Applying the sampling theorem}		\label{s: applying Rudelson}

\begin{lemma}		\label{lem: lower}
  Assume that $2^p \ge C \binom{p}{\le d} \log \binom{p}{\le d}$ 
  where $C$ is a sufficiently large absolute constant.
  Let $\Theta$ be a random subset of $\{-1,1\}^p$
  with cardinality $\abs{\Theta} = 0.1 \cdot 2^p$. 
  Then, with probability at least $1-p^{-10}$, the following uniform lower bound holds:
  \begin{equation}			\label{eq: lower}
  \frac{1}{0.1 \cdot 2^p} \sum_{\theta \in \Theta} \ip{w(\theta)}{b}^2 
  \ge 0.9 \norm{b}_2^2
  \quad \text{for all } b \in \R^{\binom{p}{\le d}}.
  \end{equation}
\end{lemma}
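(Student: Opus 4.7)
The plan is to apply Theorem~\ref{thm: sampling without replacement} directly to the collection of vectors $\{w(\theta)\}_{\theta \in \{-1,1\}^p} \subset \R^{\binom{p}{\le d}}$, with $K = 2^p$, $k = \binom{p}{\le d}$, and $N = 0.1 \cdot 2^p$. The lemma's conclusion is exactly the quadratic form version of the lower operator-norm bound provided by that theorem, so the work consists of verifying the three hypotheses of the sampling theorem in our setting.

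First, since $w_J(\theta) \in \{-1,1\}$ for every $J$ and $\theta$, we have $\|w(\theta)\|_2 = \sqrt{k} \le 10\sqrt{k}$ uniformly, so the norm hypothesis holds. Second, the isotropy condition is a restatement of the orthonormality of Walsh functions: the $(J,J')$ entry of $\frac{1}{2^p} \sum_{\theta} w(\theta) w(\theta)^\tran$ equals $\ip{w_J}{w_{J'}}_{L^2} = \delta_{J,J'}$, so this matrix is $\Id$. Third, the sample-size condition for Theorem~\ref{thm: sampling without replacement} reads $N \ge C' k \log k$, i.e.\ $0.1 \cdot 2^p \ge C' \binom{p}{\le d} \log \binom{p}{\le d}$; this is guaranteed by the lemma's assumption after absorbing the factor of $10$ into the absolute constant $C$.

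With these verifications, Theorem~\ref{thm: sampling without replacement} applied to a uniformly random subset $\Theta \subset \{-1,1\}^p$ of cardinality $N = 0.1 \cdot 2^p$ yields
\[
\frac{1}{0.1 \cdot 2^p} \sum_{\theta \in \Theta} w(\theta) w(\theta)^\tran \succeq 0.9 \cdot \Id
\]
with probability at least $1 - k^{-10}$. Taking the quadratic form in any vector $b \in \R^{\binom{p}{\le d}}$ on both sides gives exactly \eqref{eq: lower}. Finally, since $k = \binom{p}{\le d} \ge p$ for $p$ large (we may assume $d \ge 1$), we have $k^{-10} \le p^{-10}$, upgrading the probability bound to the one stated.

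There is no real obstacle here: the statement is an almost verbatim instance of the sampling theorem, and the only content is the bookkeeping of constants (replacing the hypothesis threshold $C k \log k$ for $N$ by the threshold in the lemma) together with the clean observation that the Walsh vectors $w(\theta)$ satisfy both the isotropy condition and the sharp coordinate-wise norm bound.
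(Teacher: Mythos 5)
Your proof is correct and is essentially identical to the paper's: both apply Theorem~\ref{thm: sampling without replacement} with $k = \binom{p}{\le d}$ and $N = 0.1\cdot 2^p$, verify isotropy via orthonormality of the Walsh functions and the norm bound $\norm{w(\theta)}_2 = \sqrt{k}$, and then pass to the quadratic form in $b$. Your extra remarks on absorbing constants and on $k \ge p$ only make explicit what the paper leaves implicit.
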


\begin{proof}
Orthonormality of Walsh basis \eqref{eq: Walsh} can be expressed as 
$$
\frac{1}{2^p} \sum_{\theta \in \{-1,1\}^p} w_J(\theta) \, w_{J'}(\theta)
= \delta_{J, J'}
\quad \text{for any } J, J' \subset [p].
$$
Using our notation \eqref{eq: Rpd}, this becomes
$$
\frac{1}{2^p} \sum_{\theta \in \{-1,1\}^p} w(\theta) \, w(\theta)^\tran 
= \Id.
$$
(The identity on the right side is in $\R^{\binom{p}{\le d}}$.)
Moreover, since all coordinates of $w(\theta)$ are $\pm 1$, we have
\begin{equation}	\label{eq: w norm}
\norm{w(\theta)}_2^2 = \binom{p}{\le d} 
\quad \text{for all } \theta.
\end{equation}

Apply Rudelson's Sampling Theorem \ref{thm: sampling without replacement}
for $k = \binom{p}{\le d}$ and $N = 0.1 \cdot 2^p$.
We get
$$
\frac{1}{0.1 \cdot 2^p} \sum_{\theta \in \Theta} w(\theta) w(\theta)^\tran 
\succeq 0.9 \cdot \Id
$$
with probability at least $1-k^{-10} \ge 1 - p^{-10}$. 
Multiplying by $b^\tran$ on the left and by $b$ on the right, we obtain \eqref{eq: lower}.
\end{proof}

\subsection{Completion of the proof}		\label{s: completion}

If $1 \le d \le 0.49p$ then $2^p \ge C \binom{p}{\le d} \log \binom{p}{\le d}$, so the assumptions of Lemma~\ref{lem: lower} hold. Thus, the uniform lower bound \eqref{eq: lower} holds for most of the subsets $\Theta \subset \{-1,1\}^p$ with cardinality $0.1 \cdot 2^p$. Moreover, since adding more points to the set $\Theta$ can only make the left hand side of \eqref{eq: lower} larger,
we conclude that \eqref{eq: lower} holds for most sets $\Theta$ of cardinality $0.1 \cdot 2^p$ and all their supersets, which is most of the subsets of the cube. 

We claim that for any subset $\Theta$ satisfying \eqref{eq: lower}, 
the complement $\Theta^c$ is resilient. This will be enough to complete the proof of Theorem~\ref{thm: frequencies}.
Indeed, it would follow that most of the subsets of $\{-1,1\}^p$
of cardinality $0.9 \cdot 2^p$ are resilient. 
Since a subset of a resilient subset is resilient,  
most of the subsets of $\{-1,1\}^p$ of cardinality bounded by $0.9 \cdot 2^p$ are resilient.
Thus, most of the subsets of $\{-1,1\}^p$ are resilient, completing the proof.

Assume for contradiction that $\Theta^c$ is {\em not} resilient. 
Then the necessary condition (Lemma~\ref{lem: resilience necessary})
implies the existence of 
$\theta_0 \in \Theta$ and $b \in \R^{\binom{p}{\le d}}$ such that 
\begin{equation}	\label{eq: sum w theta zero}
\sum_{\theta \in \Theta \setminus \{\theta_0\}} \ip{w(\theta)}{b}^2 = 0.
\end{equation}
On the other hand, 
$$
\sum_{\theta \in \Theta \setminus \{\theta_0\}} \ip{w(\theta)}{b}^2
= \sum_{\theta \in \Theta} \ip{w(\theta)}{b}^2 - \ip{w(\theta_0)}{b}^2.
$$
By Lemma~\ref{lem: lower}, the first term on the right hand side is 
bounded below by $0.1 \cdot 2^p \cdot 0.9 \norm{b}_2^2$.
The second term on the right hand side is bounded above by
$$
\norm{w(\theta_0)}_2^2 \cdot \norm{b}_2^2 
= \binom{p}{\le d} \norm{b}_2^2
$$
due to \eqref{eq: w norm}. Therefore we have 
\begin{equation}	\label{eq: sum w theta positive}
\sum_{\theta \in \Theta \setminus \{\theta_0\}} \ip{w(\theta)}{b}^2 > 0
\end{equation}
as long as 
$$
0.1 \cdot 2^p \cdot 0.9> \binom{p}{\le d},
$$
which is true if $d = 0.49p$ if $p$ is sufficiently large.
The contradiction of \eqref{eq: sum w theta zero} and \eqref{eq: sum w theta positive}
completes the proof of Theorem~\ref{thm: frequencies}. \qed

\bibliographystyle{plain}
\bibliography{nonuniqueness.bib}

\end{document}